\theoremstyle{plain}
\newtheorem{theorem}{Theorem}
\newtheorem{lemma}{Lemma}
\theoremstyle{definition}
\newtheorem{definition}{Definition}
\newtheorem{notation}{Notation}
\newtheorem{remark}{Remark}
\def\PP{\mathbb{P}^}
\date{}
\begin{document}

\title[]
{Minimal decomposition of binary forms with respect to tangential projections}
\author{E. Ballico}
\address{Dept. of Mathematics\\
 University of Trento\\
38123 Povo (TN), Italy}
\email{ballico@science.unitn.it}
\author{A. Bernardi}
\address{INRIA 2004 route des Lucioles, BP 93, 06902 Sophia Antipolis, France.}
\email{alessandra.bernardi@inria.fr}
\thanks{The authors were partially supported by CIRM--FBK, MIUR and GNSAGA of INdAM (Italy).}
\subjclass{14H45,14N05, 14Q05}
\keywords{Secant varieties; $X$-rank; Cuspidal curve; Rational normal curves; Linear projections; border rank.}

\maketitle

\begin{abstract}
Let $C\subset \PP n$ be a rational normal curve and let $\ell_O:\PP {n+1}\dashrightarrow \PP n$ be any tangential projection form a point $O\in T_AC$ where $A\in C$. Hence
$X:= \ell _O(C)\subset \mathbb {P}^n$ is a linearly normal cuspidal curve with degree $n+1$. For any $P = \ell _O(B)$, $B\in \PP {n+1}$, the $X$-rank $r_X(P)$ of $P$ is the minimal cardinality of a set
$S\subset X$ whose linear span contains $P$. Here we describe $r_X(P)$ in terms of the schemes computing the $C$-rank or the border $C$-rank of $B$.
\end{abstract}

\section*{Introduction}
In many applications, like Biology and Statistics, it turns out to be useful to develop techniques for reducing the dimension of high-dimensional data (like Principal Component Analysis [PCA]) that can be encoded in a tensor. In many cases this tensor turns out to be symmetric and with many entries equal to zero. One of the main problem is to find a minimal decomposition of those tensors in terms of other tensors of the same structure but with the minimal number of entries as possible (in the literature this kind of problems are known  either as Structured Tensor Rank Decomposition in the Signal Processing  language --see e.g. \cite{c}-- or as CANDECOMP/PARAFAC in the Data Analysis context --see e.g. \cite{dmv}--).
We want to address these questions from an Algebraic Geometry point of view (we suggest  \cite{hps}  for a good description about the relation between Biology, Statistics and Algebraic Geometry on these kind of questions).

Let $V$ be a finite dimensional vector space defined over an algebraically closed field $K$ of characteristic zero. A symmetric tensor is an element $T\in S^dV$. Since the  space $S^dV$ is isomorphic to the vector space of homogeneous polynomial $K[x_1, \ldots , x_{\dim(V)}]_d$ of degree $d$  in $\dim(V)$ variables with the coefficients that take values over $K$, then one can translate the questions on symmetric tensors into questions on homogeneous polynomials.

In this paper we study the case of homogeneous polynomials of certain fixed degree $n+1$ in 2 variables having one coefficient equal to zero.  
\\
Assume for a moment to have fixed an order between the generators of $K[u,t]_{n+1}$ and to have given a corresponding coordinate system, say $\{x_0, \ldots , x_{n+1}\}$.
A binary form with the coefficient in the $i$-th position equal to zero  can be obtained by projecting a binary form to the hyperplane $H_i\subset K[u,t]_{n+1}$ identified by the equation $x_i=0$. We will focus on projections $\ell_O$ from a point $O\in \PP {}(K[u,t]_{n+1})\simeq \PP {n+1}$ to $\PP {}(H_i)\simeq \PP n$ that corresponds to tangential projections to the rational normal curve that is canonically embedded in $\PP {n+1}$. This will allow to relate the minimal decomposition of a binary form $p$ of degree $n+1$ as sum of $(n+1)$-th powers of linear forms $L_1^{n+1}, \ldots , L_r^{n+1}\in K[u,t]_{n+1}$, with the minimal decomposition of the projected $\ell_O(p)\in \PP {}(H_i)$ (that is a binary form of the same degree $n+1$ but with the $i$-th coefficient equal to zero) in terms of $\ell_O(L_1^{n+1}), \ldots , \ell_O(L_r^{n+1})$. Explicitly if $r$ is the minimum number of addenda that are required to write $p\in K[u,t]_{n+1}$ as
$$p=L_1^{n+1}+\cdots + L_r^{n+1}$$
then we will prove in Theorem \ref{e4} and in Theorem \ref{e3} that there is a dense  subset of $\PP {}(H_i)\simeq \PP n$ where $r$ is also the minimum number of addenda that are required to write $\ell_O(p)$ as follows:
$$\ell_O(p)=\ell_O(L_1^{n+1})+\cdots + \ell_O(L_r^{n+1}).$$
We will also describe which is the relation between the minimal decomposition of $p$ and the minimal decomposition of $\ell_O(p) $ out of this dense subset.
The minimal decomposition of a generic binary form of degree $n+1$ in terms of $(n+1)$-th powers of binary linear forms was firstly studied by J. J. Sylvester in \cite{Sy}, then formalized with an algorithm in \cite{CS} (see also \cite{bgi} for a more recent proof). 

What we want to study in this paper is obviously a very special case for applications (in the applications one often needs linear projections from large dimensional linear subspace) but we hope to give in this way some ideas for  further works on wider classes of analogous problems. In any case this kind of questions lead to a nice geometrical problem:
the computation of $X$-ranks with respect to a degree $n+1$ cuspidal linearly normal curve $X\subset \PP n$. We prove the following Theorem \ref{e4} and a less easy to state result (see Theorem \ref{e3}). In the statement of Theorem \ref{e4} and in Section \ref{secs2} we use the following definitions and notation.

\begin{notation}\label{not}
Let $C\subset \PP {n+1}$ be a smooth rational normal curve of degree $n+1$.
Fix $A\in C$. Let $2A$ denotes the degree $2$ effective divisor of $C$ with $A$ as its
reduction. The tangent line $T_AC$ is the line $\langle 2A \rangle$. Fix also a
point
$O\in T_AC\setminus \{A\}$ to be the center of the projection $\ell_O: \PP {n+1} \dashrightarrow \PP n$ that sends $C$ into a
curve $X:=\ell_O(C)\subset \PP n$. The curve  $X$ is a linearly normal curve of $\PP n$ with degree $n+1$,
arithmetic genus $1$ and the ordinary cusp $\ell_O(A)\in X\subset \PP n$ as its unique singular point.
\end{notation}

Let $Y\subset \PP N$ to be any non-degenerate projective variety.

\begin{definition}\label{rank}
The $Y$-rank $r_Y(P)$ of a point $P\in \PP N$ with respect to a non degenerate projective variety $Y$ is the minimum integer $\rho$ for which there exists a reduced $0$-dimensional subscheme $S\subset Y$ of degree $\rho$ whose linear span $\langle S\rangle$
contains $P$. 
\end{definition}

\begin{definition}\label{computerank}
Let $P\in \langle Y \rangle \simeq \PP N$ be a point of $Y$-rank equal to $\rho$. We say that a $0$-dimensional subscheme $S\subset Y$ computes the $Y$-rank of $P$ if it is reduced, of degree $\rho$ and $P\in \langle S \rangle$.
\end{definition}

Main results of this paper are Theorem \ref{e4} and Theorem \ref{e3} were we give a description of both the $X$-rank and the $X$-border rank of a point $P\in \mathbb{P}^n$ and we relate them with the $C$-rank and the $C$-border rank of its preimage via $\ell_O$.

\begin{theorem}\label{e4}
Fix integers $n,\rho$ such that $n \ge 3$ and $2\le \rho \le \lfloor (n+3)/2\rfloor$. Let $C \subset \mathbb {P}^{n+1}$ be a rational normal curve and let also $X:= \ell _O(C)\subset \PP n$ and $O\in T_AC\setminus \{A\}$ for a fixed $A\in C$ be as in Notation
\ref{not}. Fix $M\in \mathbb {P}^{n+1}\setminus \{O\}$
such that $r_C(M)=\rho$. Let $E\subset C$ be a finite set that computes the $C$-rank of $M$.
Set $P:= \ell_O(M)$. Then the following hold:

\begin{enumerate}
\item[(i)]\label{i1} 
If $2\rho \le n$, then $r_X(P)=\rho$ and $\ell _O(E)$ is the unique subset of $X$ computing $r_X(P)$.

\item[(ii)]\label{i2}
If $n+1 \le 2\rho \le n+2$, then
$\rho -1 \le r_X(P) \le \rho$.

\item[(iii)]\label{i3} 
If $n$ is odd and $2\rho =n+3$, then there is a non-empty open subset $\mathcal {U}$ of $\mathbb {P}^{n+1}$
such that $r_C(M)=\rho$ and $r_X(P) =\rho -1$ for all $M\in \mathcal {U}$.
\end{enumerate}
\end{theorem}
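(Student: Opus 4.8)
The plan is to translate everything into the language of zero-dimensional schemes on the rational normal curve $C$ and its projection $X$, using the classical correspondence between divisors on $C$ and the apolarity/Sylvester picture for binary forms. Recall that a finite set $E\subset C$ with $\#E=\rho$ satisfies $M\in\langle\ell_O(E)\rangle$ (in $\PP n$) if and only if $E\cup\{O\}$ imposes dependent conditions on $\PP {n+1}$, i.e. the scheme $E\cup\{O\}$ of degree $\rho+1$ fails to be linearly independent; but $O$ lies on the tangent line $T_AC=\langle 2A\rangle$, so this is governed by the scheme $E\cup(2A)$ of degree $\rho+2$ (if $A\notin E$) on $C$. Since $C$ has degree $n+1$, any subscheme of $C$ of degree $\le n+2$ is linearly independent, and a subscheme of degree $n+3$ is dependent precisely when it is contained in a hyperplane, i.e. (for $C$ rational normal) lies in the unique hyperplane through it — equivalently its residual with respect to a general hyperplane section is empty. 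This Cayley--Bacharach type bookkeeping on $C$ is the engine of the whole proof.

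\textbf{Step 1 (upper bounds).} Given $E$ computing $r_C(M)=\rho$ with $M\ne O$, I would first show $\ell_O(E)$ spans $P=\ell_O(M)$ inside $\PP n$, so $r_X(P)\le\rho$ always, provided $\ell_O|_E$ is injective and $\ell_O(E)\subset X$ is still reduced of degree $\rho$; the only way this fails is if $A\in E$, which forces a separate (easy) discussion since then $\ell_O(A)$ is the cusp. For part (iii), when $n$ is odd and $2\rho=n+3$, the point is that the divisor $E+2A$ has degree $\rho+2=(n+3)/2+2\le n+2$ only marginally, and a dimension count shows that for $M$ in a suitable open set there exists a \emph{different} effective divisor $E'$ of degree $\rho-1$ on $C$ with $O\in\langle E'+2A\rangle$-type configuration, giving $r_X(P)\le\rho-1$; combined with the lower bound this yields equality. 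For part (ii) the upper bound $r_X(P)\le\rho$ is the generic statement and the content is the lower bound $r_X(P)\ge\rho-1$.

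\textbf{Step 2 (lower bounds).} This is where the real work lies. Suppose $S\subset X$ is reduced with $\#S=s<\rho$ and $P\in\langle S\rangle$. Pull back: $S$ corresponds to a reduced $S'\subset C\setminus\{A\}$ (or possibly meeting the cusp) with $\#S'=s$, and $P\in\langle\ell_O(S')\rangle$ translates to $M\in\langle S'\cup\{O\}\rangle\subseteq\langle S'\cup(2A)\rangle$. If $A\notin S'$ the scheme $Z:=S'\cup(2A)$ has degree $s+2\le\rho+1\le\lfloor(n+3)/2\rfloor+1\le n+2$, hence is linearly independent, so $M\in\langle Z\rangle$ means $M$ lies in the span of a degree-$(s+2)$ scheme on $C$; standing alone this is weaker than $r_C(M)=\rho$ only by a controlled amount. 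The decisive estimate is to feed $r_C(M)=\rho$ back in: since $E$ is a minimal apolar set, the union $E\cup Z$ — degree $\le \rho+s+2$ — must be linearly \emph{dependent} because $M$ lies in both spans, and when this degree is $\le n+2$ that is impossible; this forces $\rho+s+2\ge n+3$, i.e. $s\ge n+1-\rho$. Combined with $2\rho\le n+3$ this gives $s\ge\rho-1$ in the relevant ranges, and in case (i) where $2\rho\le n$ a sharper version of the same residuation argument (using that $E\cup Z$ of degree $\le n+2$ is independent, hence $E=Z_{red}$-type rigidity) yields $s\ge\rho$ and the uniqueness of $\ell_O(E)$. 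The uniqueness in (i) is exactly the statement that two distinct $\rho$-subsets of $X$ spanning $P$ would pull back to a dependent scheme of degree $\le 2\rho+1\le n+1$ on $C$, a contradiction.

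\textbf{Main obstacle.} The hard part will be the careful case analysis around the cusp, i.e. when the spanning set on $X$ passes through the singular point $\ell_O(A)$, or equivalently when the pulled-back scheme on $C$ interacts with the tangent direction $2A$ in a degenerate way (the residual scheme $\mathrm{Res}_{2A}(\cdot)$ bookkeeping): here "reduced on $X$" does not correspond to "reduced on $C$" in the naive way at the cusp, and one must check that passing through the cusp cannot produce a shorter decomposition. The second delicate point is the sharp dimension count in part (iii): one must exhibit explicitly, or via a parameter-count on the incidence variety $\{(M,E')\}$, that the locus where the length-$(\rho-1)$ decomposition of $P$ exists is dense in the locus $r_C(M)=\rho$, rather than a proper subvariety — this is where the parity hypothesis $n$ odd is essential, since it makes $\rho-1=(n+1)/2$ exactly the generic $X$-rank for the degree-$(n+1)$ curve $X$ in $\PP n$.
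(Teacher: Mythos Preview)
Your overall strategy --- pull a computing set $S\subset X$ back to $S'\subset C$, enlarge to $Z:=S'\cup 2A$, and compare $\langle E\rangle$ with $\langle Z\rangle$ using that subschemes of $C$ of degree $\le n+2$ are linearly independent --- is exactly the paper's approach. But the execution of Step~2 has a genuine error. The correct dichotomy is not ``$E\cup Z$ is dependent'' versus nothing; it is: when $\deg(E\cup Z)\le n+2$ the scheme $E\cup Z$ is \emph{independent}, hence $\langle E\rangle\cap\langle Z\rangle=\langle E\cap Z\rangle$, and since $M$ lies in this intersection while $M\notin\langle E'\rangle$ for any proper $E'\subsetneq E$, one is forced to $E\subseteq Z$, i.e.\ $E\subseteq S'\cup\{A\}$. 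This immediately gives $s\ge \rho-1$ (and $s\ge\rho$ if $A\notin E$), which is what parts (i) and (ii) need. Your alternative route --- deduce dependence and hence $\rho+s+2\ge n+3$, i.e.\ $s\ge n+1-\rho$ --- both ignores the possibility $E\cap Z\ne\emptyset$ and gives the wrong inequality: combined with $2\rho\le n+3$ it yields only $s\ge\rho-2$, not $\rho-1$, so part (ii) would not follow. The paper then disposes of the residual case $A\in E$, $S'=E\setminus\{A\}$ by the one-line observation that $O\notin\langle E\rangle=\langle\{M\}\cup S'\rangle$ forces $\ell_O(M)\notin\langle\ell_O(S')\rangle$; this is the entire ``cusp obstacle'' you anticipate, and it is not delicate. (Incidentally, $\ell_O\vert_C$ is injective everywhere since $O\notin C$, so $\ell_O(E)$ is always reduced of cardinality $\rho$; the case $A\in E$ causes no failure of injectivity.)

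For part (iii) you are working much too hard. No incidence-variety parameter count is needed: since $X\subset\PP n$ is a non-degenerate curve, a general $P'\in\PP n$ has $r_X(P')=\lfloor(n+2)/2\rfloor=(n+1)/2=\rho-1$, while a general $M\in\PP{n+1}$ has $r_C(M)=(n+3)/2=\rho$; and a general $P'$ is $\ell_O$ of a general $M$. That is the paper's entire argument for (iii).
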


In Theorem \ref{e3} we take as $P$ a point $\ell _O(B)$ such that the border $C$-rank of $B$ is not computed by a reduced scheme, i.e. such that the $C$-rank of $B$ is strictly bigger than the $C$-rank of $B$.
\section{Preliminaries}\label{secs1}

We give here all the definitions and all the notation that we will need in the sequel. We can state them in a general setting even if we will use them in the very particular case of tangential projections of rational normal curves. So for this section we consider $Y\subset \PP N$ to be any non-degenerate projective variety.

\begin{definition}\label{sigma} Let $\sigma^0_s(Y)\subset \PP N$ denote the set of points $P\in \PP n$ of $Y$-rank less or equal than $s$.
The $s$-th secant variety $\sigma_s(Y)\subset \PP N$ is the Zariski closure of the set $\sigma^0_s(Y)$ of Definition \ref{sigma}.
\end{definition}

\begin{remark}\label{s0}
Definition \ref{sigma} gives the following obvious chain of containments:
$$Y=\sigma_1(Y)\subsetneqq \cdots \subsetneqq \sigma_{c-1}(Y)\subsetneqq \sigma_c(Y)=\PP N$$
for a certain integer $c>0$. If $Y$ is a non-degenerate curve, then $\dim (\sigma _s(Y)) = \min \{N,2s-1\}$ for all $s>0$ (\cite{a}, Remark 1.6) and hence $c:= \lfloor (N+2)/2\rfloor$.
\end{remark}

\begin{definition}\label{brank} Let $P\in \langle Y \rangle \subset \PP N$. The $Y$-border rank $br_Y(P)$ of $P$ is the minimum integer $w$ such that $P\in \sigma_w(Y)$.
\end{definition}

If $P\in \sigma_s(Y)\setminus \sigma^0_s(Y)$ then $r_Y(P)> s$. Definition \ref{sigma} gives $br_Y(P)\leq r_Y(P)$
for all $P\in \PP N$.

We borrow from \cite{bgl} the following result (we only need the case in which $Y$ is a rational normal curve of $\mathbb {P}^{n+1}$ with $2t \le n+2$; thus the case we use is a particular
case of \cite{bgl}, Lemma 2.1.5).

\begin{lemma}\label{uu}
Let $Y\subset \mathbb {P}^N$ be a smooth and non-degenerate subvariety of dimension at most $2$. Fix an integer $t\ge 2$
and assume $\dim (\langle Z\rangle )= \deg (Z)-1$ for every 0-dimensional
subscheme $Z\subset Y$ such that $\deg (Z)\le t$. Fix $P\in \mathbb {P}^N$.

\quad (i) $P\in \sigma _t(Y)$ if and only if there is a 0-dimensional scheme $Z\subset Y$ such that $\deg (Z) \le t$ and $P\in \langle Z\rangle$.

\quad (ii) $P\in \sigma _t(Y)\setminus \sigma _{t-1}(Y)$ if and only if $t$ is the first integer such that there is a 0-dimensional subscheme $Z\subset Y$ with $\deg (Z) =t$ and $P\in \langle Z\rangle$.

\end{lemma}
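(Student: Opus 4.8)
The plan is to characterize $\sigma_t(Y)$ as the image of an incidence correspondence over the Hilbert scheme of length-$t$ subschemes of $Y$, and then to read off both (i) and (ii) from the resulting set-theoretic equality. The constant-dimension hypothesis $\dim\langle Z\rangle=\deg(Z)-1$ for $\deg(Z)\le t$ will be used twice: once to make the assignment $Z\mapsto\langle Z\rangle$ into an honest morphism, and once to guarantee that spans behave continuously under flat degeneration.

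First I would set up the morphism. Let $\mathrm{Hilb}_t(Y)$ be the Hilbert scheme parametrizing $0$-dimensional subschemes $Z\subset Y$ of degree $t$; it is projective since $Y$ is. Because every such $Z$ satisfies $\dim\langle Z\rangle=t-1$ by hypothesis, the universal span has constant fibre dimension, so $Z\mapsto\langle Z\rangle$ defines a morphism $\gamma:\mathrm{Hilb}_t(Y)\to\mathrm{Gr}(t-1,\mathbb{P}^N)$ into the Grassmannian of $(t-1)$-planes. Let $\mathcal I\subset\mathrm{Hilb}_t(Y)\times\mathbb{P}^N$ be the incidence variety $\{(Z,Q):Q\in\langle Z\rangle\}$, i.e. the pullback along $\gamma$ of the universal $(t-1)$-plane. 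It is closed, hence projective, so the second projection $\pi:\mathcal I\to\mathbb{P}^N$ is proper and $\pi(\mathcal I)$ is closed. The goal of the core argument is to prove $\pi(\mathcal I)=\sigma_t(Y)$.

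Next I would establish the two containments. For $\pi(\mathcal I)\supseteq\sigma_t(Y)$: any reduced $S\subset Y$ of degree $\le t$ can be enlarged by adjoining reduced points of $Y$ to a reduced $Z$ of degree exactly $t$ with $\langle S\rangle\subseteq\langle Z\rangle$, so $\langle S\rangle\subset\pi(\mathcal I)$; hence $\sigma^0_t(Y)\subset\pi(\mathcal I)$, and since $\pi(\mathcal I)$ is closed it contains $\sigma_t(Y)=\overline{\sigma^0_t(Y)}$. For $\pi(\mathcal I)\subseteq\sigma_t(Y)$ I would use $\dim Y\le 2$: every $Z\in\mathrm{Hilb}_t(Y)$ is smoothable, since for $\dim Y=1$ a length-$t$ scheme is an effective divisor, an evident limit of reduced divisors, while for $\dim Y=2$ a theorem of Fogarty gives that $\mathrm{Hilb}_t(Y)$ is smooth and irreducible with reduced schemes dense. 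Choosing a flat family $Z_\lambda$ with $Z_0=Z$ and $Z_\lambda$ reduced for $\lambda\ne 0$, continuity of $\gamma$ forces $\langle Z_\lambda\rangle\to\langle Z\rangle$ in the Grassmannian; each $\langle Z_\lambda\rangle$ ($\lambda\ne 0$) consists of points of $Y$-rank $\le t$, so it lies in $\sigma_t(Y)$, and by closedness its limit $\langle Z\rangle$ does too. Thus every point of every $\langle Z\rangle$ lies in $\sigma_t(Y)$, yielding the reverse containment and the equality.

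Finally I would read off the conclusions. Part (i) is immediate: $P\in\sigma_t(Y)$ iff $P\in\pi(\mathcal I)$ iff there is $Z\subset Y$ of degree $t$ (a fortiori $\le t$) with $P\in\langle Z\rangle$; conversely any $Z$ of degree $\le t$ with $P\in\langle Z\rangle$ gives $P\in\sigma_t(Y)$ after padding $Z$ up to degree $t$, which does not shrink the span. For part (ii) I would apply (i) to both $t$ and $t-1$ — the hypothesis holds for all degrees $\le t$, and $t\ge 2$ ensures $t-1\ge 1$ — so that $P\in\sigma_t(Y)\setminus\sigma_{t-1}(Y)$ holds exactly when the least degree of a scheme whose span contains $P$ equals $t$, which is the asserted minimality. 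The main obstacle is the containment $\pi(\mathcal I)\subseteq\sigma_t(Y)$: it is precisely here that smoothability, and hence the hypothesis $\dim Y\le 2$, together with the continuity of $Z\mapsto\langle Z\rangle$ coming from the constant-dimension assumption, are genuinely needed; everything else is formal manipulation of proper images and Zariski closures.
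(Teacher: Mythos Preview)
Your proof is correct and follows essentially the same approach as the paper's. The paper's argument is highly compressed: it invokes smoothability of $0$-dimensional subschemes on smooth varieties of dimension at most $2$ (citing Fogarty) and then defers to \cite{bgi}, Proposition~11, for the mechanism that turns smoothability plus the constant-span hypothesis into the set-theoretic description of $\sigma_t(Y)$; part (ii) is then derived from (i) applied to $t$ and $t-1$, exactly as you do. Your write-up simply makes explicit the incidence-correspondence argument over $\mathrm{Hilb}_t(Y)$ and the continuity of $Z\mapsto\langle Z\rangle$ that underlie the cited proposition, so there is no substantive difference in strategy.
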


\begin{proof}
Since $Y$ is smooth and $\dim (Y) \le 2$, every 0-dimensional subscheme $A$ of $Y$ is smoothable, i.e. it is a flat limit of a family of unions of $\deg (A)$ distinct points
(\cite{f}).
As remarked in the proof of \cite{bgl}, Lemma 2.1.5, the assumption ``$\dim (\langle Z\rangle )= \deg (Z)-1$ for every 0-dimensional
scheme $Z\subset Y$ such that $\deg (Z)\le t$'' is sufficient to use \cite{bgi}, Proposition 11, and get part (i).

Part (ii) follows from part (i) applied to the integers $t$ and $t-1$.
\end{proof}

\begin{definition}\label{computeborder} Let $Y\subset \PP N$ be a smooth and non-degenerate variety of dimension at most $2$. Fix an integer $w\ge 2$
and assume $\dim ( \langle Z\rangle )= \deg (Z)-1$ for every 0-dimensional
subscheme $Z\subset Y$ such that $\deg (Z)\le w$. Let $P\in \sigma_w(Y)\setminus (\sigma_w^0(Y)\cup \sigma_{w-1}(Y))$, then, by Lemma \ref{uu}, there exists a non-reduced $0$-dimensional subscheme $W\subset Y$ such that $P\in \langle W \rangle$. We say that such a $W$ computes the $Y$-border rank of $P$.
\end{definition}

\begin{lemma}\label{e1}
Fix an integral and non-degenerate subvariety $Y \subset \mathbb {P}^{n+x}$, $n >0$, $x>0$, and a linear $(x-1)$-dimensional subspace $V \subset \mathbb {P}^{n+x}$ such that $V\cap Y = \emptyset$. Set $X:= \ell _V(Y)$.
Then
\begin{equation}\label{eqe1}
r_X(\ell _V(Q)) = \min _{P\in (\langle V\cup \{Q\}\rangle \setminus V)} r_Y(P) \  \hbox{ for all }Q\in \mathbb {P}^{n+x}\setminus V.
\end{equation}
\end{lemma}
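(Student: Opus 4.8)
The plan is to split the claimed equality into its two inequalities, each of which reduces to elementary linear algebra for $\ell_V$; the only genuine subtlety is that $\ell_V$ is a rational map, regular exactly on $\mathbb{P}^{n+x}\setminus V$, and that the span of a finite subset of $Y$ may meet the center $V$. First I would record two preliminary remarks: that $X=\ell_V(Y)$ is again integral and non-degenerate in $\mathbb{P}^n$ (so $r_X$ is defined on all of $\mathbb{P}^n$), because $\ell_V|_Y$ is a morphism ($Y\cap V=\emptyset$) with irreducible proper image whose span pulls back to a linear space containing $Y$; and that for any $P\in\langle V\cup\{Q\}\rangle\setminus V$ one has $\ell_V(P)=\ell_V(Q)$, since $P$ and $Q$ lie on the same fiber of $\ell_V$.

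For the inequality $r_X(\ell_V(Q))\le\min_P r_Y(P)$ I would fix an admissible $P\in\langle V\cup\{Q\}\rangle\setminus V$, choose a reduced $S\subset Y$ computing $r_Y(P)$, and observe that, because $Y\cap V=\emptyset$, $\ell_V$ is defined on $S$ and, being induced by a linear surjection of the underlying vector spaces, sends the point $P\in\langle S\rangle$ (which is not in $V$) to a point $\ell_V(P)\in\langle\ell_V(S)\rangle$. Since $\ell_V(S)\subset X$ is a finite reduced set of at most $|S|$ points, Definition \ref{rank} gives $r_X(\ell_V(Q))=r_X(\ell_V(P))\le|\ell_V(S)|\le |S|=r_Y(P)$; minimizing over $P$ finishes this direction.

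For the reverse inequality I would start from a reduced set $S'\subset X$ computing $r_X(\ell_V(Q))$, of cardinality $\rho$, and lift it through the surjection $\ell_V|_Y\colon Y\to X$ (choosing one preimage per point of $S'$) to a reduced set $S\subset Y$ with $\ell_V(S)=S'$ and $|S|=\rho$. From $\ell_V(Q)\in\langle S'\rangle=\langle\ell_V(S)\rangle$ the vector-space picture gives $Q\in\langle S\cup V\rangle=:\Pi$. The key computation is then carried out inside the linear space $\Pi$: writing $d=\dim\langle S\rangle$ and $e=\dim(\langle S\rangle\cap V)$, the Grassmann formula gives $\dim\Pi=d+x-1-e$ while $\dim\langle V\cup\{Q\}\rangle=x$ (as $Q\notin V$), whence $\dim(\langle S\rangle\cap\langle V\cup\{Q\}\rangle)\ge d+x-\dim\Pi=e+1$; since this intersection meets $V$ in exactly $\langle S\rangle\cap V$ (of dimension $e$), it is not contained in $V$, so it contains a point $P\notin V$. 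That point satisfies $P\in\langle V\cup\{Q\}\rangle\setminus V$ and $P\in\langle S\rangle$ with $S\subset Y$ reduced of cardinality $\rho$, hence $r_Y(P)\le\rho=r_X(\ell_V(Q))$, giving $\min_P r_Y(P)\le r_X(\ell_V(Q))$.

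The main obstacle, and the only place the argument is not completely automatic, is the last step: when $\langle S\rangle$ happens to meet the center $V$ one must still produce a point of $\langle S\rangle$ lying on the fiber $\langle V\cup\{Q\}\rangle\setminus V$, and this is exactly what the strict inequality $\dim(\langle S\rangle\cap\langle V\cup\{Q\}\rangle)\ge e+1>e=\dim(\langle S\rangle\cap V)$ guarantees. Apart from that, the whole proof is the observation that $\ell_V$ is linear away from $V$ together with the fact that the $0$-dimensional schemes entering Definition \ref{rank} are here just finite reduced sets, so no smoothability or apolarity input is needed at this stage.
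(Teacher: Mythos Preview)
Your argument is correct and follows the same two-inequality scheme as the paper. The ``$\le$'' direction is identical. For ``$\ge$'' both you and the paper lift a set $S'\subset X$ computing $r_X(\ell_V(Q))$ to $S\subset Y$ and then locate a point $P\in\langle S\rangle\cap(\langle V\cup\{Q\}\rangle\setminus V)$.

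The one genuine difference is in how that point is found. You treat the possibility $\langle S\rangle\cap V\ne\emptyset$ as the main obstacle and overcome it with a Grassmann dimension count. The paper instead observes that since $S'$ computes $r_X(\ell_V(Q))$ it is linearly independent, and that $S'=\ell_V(S)$ is linearly independent if and only if $S$ is linearly independent \emph{and} $\langle S\rangle\cap V=\emptyset$; hence the obstacle you worried about never actually occurs, and the intersection $\langle S\rangle\cap\langle V\cup\{Q\}\rangle$ is a single point off $V$. Your route is a little more general (it would survive if one replaced ``rank-computing'' by any finite $S'$ with $\ell_V(Q)\in\langle S'\rangle$), while the paper's route is shorter and pinpoints exactly why the subtlety is vacuous in the rank setting.
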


\begin{proof} First of all let us prove the inequality ~``$\ge$~'' in (\ref{eqe1}).
Since $V\cap Y = \emptyset$, then obviously $\ell _V\vert Y$ is a finite morphism. Since $ \ell _V\vert Y : Y\to X$ is surjective, for each finite set of points $S\subset X$ we may fix another finite subset $S_V\subset Y$ 
such that $\ell _V(S_V)=S$ and $\sharp (S_V)=\sharp (S)$. Since $S_V \subseteq Y$, then $S_V\cap V =\emptyset$. Thus
the set $S\subset X$ turns out to be linearly independent if and only if $S_V$ is linearly
independent and $\langle S_V\rangle \cap V =\emptyset$. 
Now fix $Q\in \mathbb {P}^{n+x}\setminus V$ and take
$S\subset X$ computing $r_X(\ell _V(Q))$. Thus $\sharp (S) = r_X(\ell _V(Q))$ and $S$ is linearly independent by definition of a set that computes the $X$-rank of a point (see Definition \ref{computerank}). Since $S$ is linearly independent, the set $S_V$ is linearly
independent and $\langle S_V\rangle \cap V =\emptyset$. Now $\ell _V(Q)$ is an element of $\langle S\rangle$, then $\langle S_V\rangle \cap \langle V\cup \{Q\}\rangle \ne \emptyset$. Since $\langle S_V\rangle \cap V =\emptyset$, there
is a unique $P\in (\langle V\cup \{Q\}\rangle \setminus V)$ such that $\{P\} = \langle S_V\rangle \cap \langle V\cup \{Q\}\rangle$.
Since $S_V\subset Y$, we have $r_Y(P) \le \sharp (S_V) =\sharp (S) = r_X(\ell _V(Q))$.

To get the reverse inequality we may just quote Lemma 14 in \cite{bb} but since it is quite easy to be proved, we show here a shorter proof. Fix any $P\in (\langle V\cup \{Q\}\rangle \setminus V)$ and any $A\subset Y$ computing $r_Y(P)$.
Since $P\in (\langle V\cup \{Q\}\rangle \setminus V)$ we have $\ell _V(P) = \ell _V(Q)$. Since $\ell _V(P) \in \langle \ell _V(A)\rangle$, we have $r_X(\ell _V(Q)) \le r_Y(P)$.\end{proof}

\section{Theorems}\label{secs2}

We can now focus on tangential projections $X\subset \PP n$ of rational normal curves $C\subset \PP {n+1}$ for $n\geq 3$. We give both a description of the schemes that realize the $X$-border rank (Theorem \ref{e3})
and the $X$-rank (Theorem \ref{e4}) of a point $P\in \PP n$ with respect to a curve $X$ just described and the precise value
of the $X$-rank of such a point $P$ (except in the critical range $2w \ge n$ or $2\rho \ge n$, respectively). In Theorem \ref{e3} we give the $X$-rank of a point $P\in \PP n$ that is the image via
$\ell_O$ of a point $B\in \PP {n+1}$ whose $C$-border rank is smaller that its $C$-rank. In Theorem \ref{e4} the point $P\in
\PP n$ is the image of a point $M\in \PP {n+1}$ whose $C$-border rank is equal to its $C$-rank. Moreover we will
explain the relation between the schemes that compute $br_X(P)$ and $r_X(P)$ and the schemes that compute $br_C(B)$ and
$r_C(B)$ where $B\in \PP {n+1}$ is a point that is sent into $P\in \PP n$ by the tangential projection.

\begin{theorem}\label{e3} Fix integers $n, w$ such that $n \ge 3$ and $2w\le n+3$. Let $C \subset \mathbb {P}^{n+1}$ be a rational normal curve and let also $X:= \ell _O(C)\subset \PP n$ and $O\in T_AC\setminus \{A\}$ for a fixed $A\in C$ be as in Notation \ref{not}.
Fix $B\in \sigma_w(C)\setminus \sigma_w^0(C)\subset \PP {n+1}$ and set $P:= \ell _O(B)$.
Let $W \subset C$ be any degree $w$ subscheme which computes $br_C(B)$.

\begin{enumerate}

\item\label{1}
We have $O\in \langle W\rangle$ if and only if $A$ appears with multiplicity
at least $2$ in $W$.

\item\label{2} If either $A$ appears with multiplicity at least $3$ in $W$ or $A$ appears with multiplicity $2$ in $W$, but
$W\setminus 2A$ is not reduced, then $r_X(P)=n+3-w$. For every $w\ge 3$ this case occurs for some pair $(B,W)$.

\item\label{3} Assume that $A$ appears with multiplicity $2$ in $W$ and that $W\setminus 2A$ is reduced. Then both of
the following cases may occur:
\begin{itemize}
\item
either $r_X(P) = w-1$ and $\ell _O(W_{red}) $ computes $r_X(P)$ 
\item
or $w \ge 3$, $r_X(P) =w-2$
and $\ell _O(W\setminus 2A)$ computes $r_X(P)$.  
\end{itemize}

\item\label{4} Assume $2w \le n+1$, $O\notin \langle W\rangle$
and $A\notin W_{red}$. If $2w \le n-1$, then
$r_X(P) =n+1-w$. If $n \le 2w \le n+1$, then $n+1-w \le r_X(P) \le n+3-w$.

\item\label{5}
If $2w \le n$, $O\notin \langle W\rangle$
and $A\in W_{red}$, then $A$ appears with multiplicity $1$ in $W$ and
$r_X(P) =n+2-w$.

\end{enumerate}

\end{theorem}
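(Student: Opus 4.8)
The plan is to push everything to the rational normal curve $C$ by means of Lemma \ref{e1}. Writing $L:=\langle O,B\rangle$ for the line through $O$ and $B$, Lemma \ref{e1} with centre $V=\{O\}$ gives
$$r_X(P)=\min_{B'\in L\setminus\{O\}}r_C(B')=\min\{\,\sharp S:\ S\subset C\ \text{reduced},\ B\in\langle S\cup\{O\}\rangle\,\},$$
the last equality because a reduced $S\subset C$ has $O\notin\langle S\rangle$ (otherwise $O\in\langle 2A\rangle\cap\langle S\rangle=\langle 2A\cap S\rangle\subseteq\langle A\rangle\not\ni O$), so that $B\in\langle S\cup\{O\}\rangle$ is equivalent to $\langle S\rangle\cap L\neq\emptyset$. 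On the $C$–side I use the classical description of ranks with respect to a rational normal curve (\cite{Sy}, \cite{CS}, \cite{bgi}): since $2w\le n+2$, $W$ is the unique $0$–dimensional subscheme of $C$ of degree $w$ whose linear span contains $B$; $r_C(B)\in\{w,\,n+3-w\}$, hence $r_C(B)=n+3-w$ here, witnessed by a reduced $E\subset C$ with $\sharp E=n+3-w$, $E\cap W=\emptyset$, $\langle W\rangle\cap\langle E\rangle=\{B\}$. I also use throughout that every subscheme of $C$ of degree $\le n+2$ is linearly independent, that $C\cap\langle Z\rangle=Z$ for $\deg Z\le n+1$, that $T_AC=\langle 2A\rangle$, and that every point of $T_AC\setminus C$ has $C$–rank $n+1$.

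Part (1) is then immediate: $O\in\langle W\rangle$ iff $O\in\langle 2A\rangle\cap\langle W\rangle=\langle 2A\cap W\rangle$ (independence of $W\cup 2A$, of degree $\le n+2$), iff $\deg(2A\cap W)=2$ (since $O\notin\langle A\rangle$), iff $A$ has multiplicity $\ge2$ in $W$. Also, taking $B'=B$ in the displayed formula gives the trivial bound $r_X(P)\le n+3-w$, which is the upper bound in the critical range of (4) and is part of (2). The key observation for (2)–(5) is that $L\subseteq\langle\Gamma\rangle$, where $\Gamma:=W\cup 2A$ has degree $w$, $w+1$ or $w+2$ according as the multiplicity of $A$ in $W$ is $\ge2$, $=1$ or $=0$ (indeed $O\in\langle 2A\rangle\subseteq\langle\Gamma\rangle$ and $B\in\langle W\rangle\subseteq\langle\Gamma\rangle$); hence $br_C(P')\le\deg\Gamma$ for every $P'\in L$.

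In case (2) ($\deg\Gamma=w$) every $P'\in L$ has $br_C(P')\le w$; if $br_C(P')=b<w$ with border scheme $W'$, then $P'\in\langle W\rangle\cap\langle W'\rangle=\langle W\cap W'\rangle$ forces $W'\subsetneq W$, and either $2A\subseteq W'$ (so $O\in\langle W'\rangle$, $L\subseteq\langle W'\rangle$, $B\in\langle W'\rangle$, contradicting $br_C(B)=w$) or $2A\not\subseteq W'$ (so $B\in\langle 2A\cup W'\rangle\subseteq\langle W\rangle$ forces $2A\cup W'=W$, contradicting that in case (2) $W$ is not of the form $2A+(\text{reduced})$). So $br_C(P')=w$ for all $P'\in L\setminus\{O\}$, its border scheme is the non-reduced $W$, whence $r_C(P')=n+3-w$ and $r_X(P)=n+3-w$; the occurrence of this case is witnessed by $W=3A+W_0$ (or $W=2A+2A'+W_0$) with $W_0$ general and $B$ general in $\langle W\rangle$. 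In case (3) ($A$ of multiplicity exactly $2$, $W\setminus 2A$ reduced, so $\Gamma=W$ and $W_{red}=A\cup(W\setminus 2A)$ has degree $w-1$): $\langle W_{red}\rangle$ is a hyperplane of $\langle W\rangle$ not through $O$, so $L$ meets it in a single point, giving $r_X(P)\le w-1$, and $\ell_O(W_{red})$ computes it; if $B$ lies on the line joining $O$ to a point of $\langle W\setminus 2A\rangle$ then $L$ also meets $\langle W\setminus 2A\rangle$, giving $r_X(P)\le w-2$ computed by $\ell_O(W\setminus 2A)$; the reverse inequality $r_X(P)\ge w-2$ follows from the same dichotomy as in (2) (a point of $L$ with $br_C=b<w$ has border scheme $W'$ with $W\setminus 2A\subseteq W'\subsetneq W$, so $b\ge w-2$ and $r_C\in\{b,n+3-b\}\ge w-2$). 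Both values occur, for $B$ generic (value $w-1$) or on the indicated line (value $w-2$).

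Finally, in case (5) ($A$ of multiplicity $1$, $\Gamma=\widetilde W:=W\cup 2A$ of degree $w+1$ with $2(w+1)\le n+2$), $\widetilde W$ is the unique border scheme of the generic point of $L$; being non-reduced it forces that point to have $C$–rank $n+2-w$, so $r_X(P)\le n+2-w$, and the reverse bound uses that $W\setminus A$ is non-reduced together with the span-intersection analysis above. In case (4) ($A\notin W$, $\Gamma$ of degree $w+2$, outside the uniqueness range), the lower bound $r_X(P)\ge n+1-w$ is the cleanest instance of the mechanism: for a reduced $S$ with $B\in\langle S\cup\{O\}\rangle$ one gets $B\in\langle W\rangle\cap\langle S\cup 2A\rangle=\langle W\cap S\rangle$, impossible since $br_C(B)=w$ while $\sharp(W\cap S)\le\sharp W_{red}<w$; hence $W\cup S\cup 2A$ cannot be linearly independent, i.e. $w+\sharp S+2-\sharp(W\cap S)-[A\in S]\ge n+3$, so $\sharp S\ge n+1-w$. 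For the matching upper bound $r_X(P)\le n+1-w$ when $2w\le n-1$, one must produce a reduced $S$ of size $n+1-w$, disjoint from $W_{red}\cup\{A\}$, whose span meets $L$ — equivalently, such that the unique linear dependence among the degree-$(n+3)$ scheme $W\cup S\cup 2A$ expresses $B$ in terms of $S$ and $O$. This is the main obstacle: it is done by a dimension/linkage count which succeeds precisely for $2w\le n-1$, whereas for $n\le 2w\le n+1$ only $n+1-w\le r_X(P)\le n+3-w$ survives. The remaining effort is bookkeeping of spans of subschemes of $C$ near the boundary values $2w=n,n+1$, and checking that each exhibited point of $L$ realizes exactly, and not less than, the claimed rank.
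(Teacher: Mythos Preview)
Your overall strategy---reducing to $C$-ranks along the fibre $L=\langle O,B\rangle$ via Lemma~\ref{e1}, then repeatedly using that degree $\le n+2$ subschemes of $C$ are linearly independent together with Sylvester's dichotomy $r_C\in\{br_C,\,n+3-br_C\}$---is exactly the paper's approach, and your treatment of (1), (3), (5) and the lower bound in (4) follows the same lines. One small slip: in (2) you assert $br_C(P')=w$ for every $P'\in L\setminus\{O\}$, but in fact $br_C(P')<w$ can occur with a non-reduced border scheme $W'\subsetneq W$; since then $r_C(P')=n+3-b>n+3-w$, the conclusion $r_X(P)=n+3-w$ survives, so this is harmless.

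The real divergence is in the \emph{upper bound} $r_X(P)\le n+1-w$ of part (4). You propose to construct directly a reduced $S$ of size $n+1-w$ whose span meets $L$, call this ``the main obstacle'', and defer to an unspecified ``dimension/linkage count''. The paper bypasses this construction entirely: it shows that for every $Q\in L\setminus\{O\}$ one has $Q\notin\langle G\rangle$ for any proper $G\subsetneq 2A\cup W$, and then, when $2w\le n-2$, uses $\deg(E)+\deg(2A\cup W)\le n+2$ to force any border scheme $E$ of $Q$ to equal $2A\cup W$. Since $2A\cup W$ is non-reduced of degree $w+2$, Sylvester gives $r_C(Q)=n+3-(w+2)=n+1-w$ immediately. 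In the boundary case $2w=n-1$ one only obtains $br_C(Q)=w+2$, but then $w+2=n+1-w$, so $r_C(Q)=n+1-w$ whether or not the border scheme is reduced. This is shorter and sharper than the linkage route you sketch, and it is precisely what makes the case $2w=n-1$ go through; your proposal leaves that step as a genuine gap.
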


\begin{proof} Since $C$ is a rational normal curve of $\mathbb {P}^{n+1}$, every 0-dimensional
subscheme $Z\subset C$ such that $\deg (Z) \le n+2$ is linearly independent, i.e. $\dim \langle Z\rangle =\deg (Z)-1$, with the usual conventions
$\deg (\emptyset )=0$, $\langle \emptyset \rangle =\emptyset$ and $\dim (\emptyset )=-1$. Thus if $Z_1, Z_2$ are 0-dimensional
subschemes of $C$ and $\deg (Z_1)+\deg (Z_2) \le n+2$, then $\langle Z_1\rangle \cap \langle Z_2 \rangle = \langle Z_1\cap Z_2\rangle$, where $Z_1\cap Z_2$ denote the scheme-theoretic intersection. 
First of all observe that since the subscheme $W\subset C$ computes $br_C(B)$ and $r_C(P) > br_C(P)$, then $W$ is not reduced.
Let us first prove the uniqueness of such a subscheme $W\subset C$.
Assume that $W_1\subset C$ is another such a subscheme. Hence $B\in \langle W\rangle \cap \langle W_1\rangle$. By Definition
\ref{computeborder} $B\notin \langle W'\rangle$ for any $W'\subsetneqq W$ and $\deg (W)+\deg (W_1)=2\deg(W)=2w\le
n+2$. Hence  $W_1\cap W = W$, i.e. $W_1=W$. 

Since we took $B\in \sigma_w(C)\setminus \sigma_w^0(C)$, then $r_C(B) = n+3-w$ (see \cite{bgi}, Theorem 23). Thus $r_C(P) \le n+3-w$.

\quad (a) Since $\deg (2A)+\deg (W) = 2+w\le n+3$, we have $\langle 2A\rangle \cap \langle W\rangle =\langle 2A\cap W\rangle$.
Thus $O\in \langle W\rangle$ if and only if $A$ appears in $W$ with multiplicity at least $2$.

Notice that
$O\in
\langle W\rangle$ if and only if
$\langle
\{O,B\}\rangle
\subseteq
\langle W\rangle$. We study now the case $O\in \langle W\rangle$. Fix any point $Q\in \langle \{O,B\}\rangle\setminus \{O\}$.
Since
$Q\in \langle W\rangle$, we have $br_C(Q) \le w$. Thus (by the so called Sylvester algorithm, see e.g. \cite{bgi}) either
$r_C(Q) = br_C(Q)$ or $r_C(Q) = n+3 -br_C(Q) \ge n+3-w = r_C(B)$. If the latter case occurs for all $Q\in \langle
\{O,B\}\rangle\setminus \{O\}$, then, by Lemma \ref{e1}, $r_X(\ell _O(B)) =n+3-w$.
\\
Assume the existence of $Q\in \langle \{O,B\}\rangle\setminus \{O\}$ such that $r_C(Q) =br_C(Q)$. Take $S_1\subset C$ computing $r_C(Q)$. Since $Q \in \langle W\rangle$,
the proof of the uniqueness of $W$ gives $S_1\subseteq W$. Since $W$ is not reduced, then $S_1\subsetneqq W$. Since
$Q\ne O$, and $B \notin \langle S_1\rangle$, $Q$ is the only point of the line
$\langle \{O,B\}\rangle$ contained in $\langle S_1\rangle$. Since $O\in \langle 2A\rangle$, we get $\langle \{O,B\}\rangle \subseteq \langle 2A\cup S_1\rangle$. Thus the uniqueness of
$W$ gives $W \subseteq S_1\cup 2A$. Since $S_1$ is reduced, $S_1\subseteq  W$ and $W$ is not reduced, we get that $A$
appears with multiplicity $2$ in $W$ and $W = S_1\cup 2A$. 
If $A\notin S_1$, then $\sharp (S_1) = w-2$. If $A\in S_1$, then $\sharp (S_1) = w-1$
and $W_{red} = S_1$.
\\
We want to show that both cases
occur for certain points $B$ if $w \ge 3$ and we also want to describe all points $B$ for which they occur.
\\
If $W =2A$, then $P=\ell _O(B)\in X$ and hence $r_X(P) =1$. 
\\
Now assume $W\ne 2A$, i.e. $w \ge 3$. Take any $S_2\subset C$ such that $\sharp (S_2) =w-2$ and $A\notin S_2$. Set $W:= S_2\cup 2A$.
Since $w \le n+2$, we saw that $W$ is linearly independent. Set $\Sigma := \langle S_2\cup \{O\}\rangle$. Since $O\ne A$, there is a non-empty
open subset $\Omega$ of the $(w-2)$-dimensional projective space $\Sigma$ such that if $B_1\in \Omega$, then $B_1\notin \langle W'\rangle$ for all $W'\subsetneqq W$. If $B_1\in \Omega$, then $\ell _O(B_1)\in
\langle \ell _O(S_2)\rangle$. If $B_2\in \langle W\rangle \setminus \Sigma$ and $B_2\notin \langle W'\rangle$ for all $W'\subsetneqq B$, then $\ell _O(B_2)\in \langle
\ell _O(\{A\}\cup S_2)\rangle$ and $\ell _O(B_2)\notin \langle
\ell _O(S_2)\rangle$. We saw that $r_X(\ell _O(B_1)) = w-2$ and $r_X(\ell _O(B_2)) = w-1$.

Now we check that for every integer $w$ such that $3 \le w \le (n+3)/2$ we may find $(W,B)$ such that $O\in \langle W\rangle$ and $r_X(\ell _O(B)) = n+3-w$. We just saw that this is the case
for all $W$ containing $A$ with multiplicity at least $3$ and for all $B\in \langle W\rangle$ such that $B\notin \langle W'\rangle$ for any $W'\subsetneqq W$. We saw
that for every $w \le (n+3)/2$ and any degree $w$ scheme $Z\subset C$, the scheme $Z$ computes $br_C(D)$ for all $D\in \langle
Z\rangle$ such that $D\notin \langle Z'\rangle$ for any $Z'\subsetneqq Z$.

\quad (b) Here we assume $2w \le n+1$, $O\notin \langle W \rangle$ and $A\notin W_{red}$. In this case the dimension of
$\langle 2A\cup W\rangle
$ is $w+1$ because $\deg (2A\cup W)=2+w \le n$.  Hence $\langle 2A\rangle\cap \langle W\rangle =\emptyset$
and $\langle 2A\rangle \cap \langle \{A\}\cup W\rangle =\{A\}$. Since $O\in \langle 2A\rangle$ and $O\ne A$, we get that $O\notin \langle \{A\}\cup W\rangle$.
Fix any point
$Q\in
\langle
\{O,B\}\rangle
\setminus
\{O\}$. Since $B\in \langle W\rangle$ and $O\in \langle 2A\rangle$, we have $Q\in \langle 2A\cup W\rangle$. Thus, by  \cite{bgi}, Proposition 11,
$\eta := br_C(Q) \le w+2$. Let $E\subset C$ be any scheme computing $br_C(Q)$. The scheme $E$ is unique if $2\eta \le n+2$. Since $B\in \langle W\rangle \subset \langle \{A\}\cup W\rangle$,
$O\in \langle 2A\rangle$, $O\notin \langle \{A\}\cup W\rangle$, $Q\ne A$ and $\langle \{A\} \cup W\rangle \cap \langle 2A\rangle =\{A\}$, then $Q\notin \langle \{A\} \cup W\rangle$.
\\
Assume the existence of a proper subscheme $G\subsetneqq 2A\cup W$ such that $Q\in \langle G\rangle$. Since $Q\notin \langle \{A\} \cup W\rangle$ there is $G_1\subsetneqq W$ such
that $Q\in \langle 2A\cup G_1\rangle$. Since $O\in \langle 2A\rangle$, we get $B\in \langle 2A\cup G_1\rangle$. Since $\deg (2A\cup G_1) +\deg (W) \le 2+w-1+w \le n+2$, we get
$\langle 2A\cup G_1\rangle \cap \langle W\rangle = \langle G_1\rangle$, contradicting the assumption $br_C(B)=w$.
Thus
there is no proper subset $G$ of $2A\cup W$ such that $Q\in \langle G\rangle$. 

\quad (b1) Here we assume $2w \le n-2$. Since $\deg (E)+\deg (2A\cup W) \le \eta +w+2 \le 2w+4 \le n+2$, we have $\langle E\rangle \cap \langle 2A\cup W\rangle
= \langle E\cap (2A\cup W)\rangle$. Since $Q\in \langle E\rangle \cap \langle 2A\cup W\rangle$ and $Q\notin \langle G\rangle$ for any proper subscheme $G$ of $2A\cup W$, we
get $E = 2A\cup W$. Thus $\eta = w+2$ and $br_C(Q)$ is computed by an unreduced scheme. Thus $r_C(Q) =n+1-w$ (Sylvester). Lemma \ref{e1} gives $r_X(P) = n+1-w$.

\quad (b2) Here we assume $2w = n-1$. First assume $\eta \le w+1$. Since $\deg (E)+\deg (2A\cup W) \le \eta +w+2 \le n+2$ and
$Q\in \langle E\rangle \cap \langle 2A+W\rangle$, we get $E\supseteq 2A\cup W$, contradiction. Thus $\eta = w+2$. If $E$ is not reduced, then we get
$r_C(Q) = n+1-w$ (Sylvester). If $E$ is reduced, then $r_C(Q) = w+2 =n+1-w$. Hence
in both cases Lemma \ref{e1} gives $r_X(P) = n+1-w$.

\quad (b3) Here we assume $n \le 2w \le n+1$. As above we get $\eta \ge n+1-w$. Hence $r_C(Q) \ge n+1-w$. Lemma \ref{eqe1} gives $r_X(P) \ge n+1-w$.

\quad (c) Assume $2w\le n$ and $A\in W_{red}$. If $A$ appears with multiplicity at least $2$ in $W$, then $O\in \langle W\rangle$. This case was considered in step (a). Now assume that $A$
appears with multiplicity $1$ in $W$.
Then $A\in \langle W\rangle$ and $\deg (W\cup 2A)=w+1$. Set $W_1:= W\setminus \{A\}$ and $W_2:= W_1\cup 2A$. Thus $\deg (W_2)
=\deg (W_1)+2 =w+1$. By step (a) we have $O\notin \langle W\rangle$. Fix any
$Q\in
\langle
\{O,B\}\rangle
\setminus
\{O,B\}$. Since $B\in \langle W\rangle$ and $O\notin \langle W\rangle$, then $Q\notin \langle W\rangle$.
Since $O\in \langle 2A\rangle$, then $Q\in \langle W_2\rangle$. Thus $br_C(Q) \le w+1$. Since $2(w+1) \le n+2$, we also know that $br_C(Q)$
is computed by a unique scheme $\Gamma$ and that $\Gamma \subseteq W_2$. Since $B\in \langle 2A\cup \Gamma \rangle$, we also
have $W \subseteq \Gamma \cup 2A$. Hence either $\Gamma = W_2$ or $\Gamma = W$ or $\Gamma = W_1$. Since $Q\notin \langle W\rangle$, we have
$\Gamma = W_2$. Thus $br_C(Q) =w+1$ and $b_C(Q)$ is computed by an unreduced subscheme. Thus $r_C(Q) =n+3 -br_C(Q) =n+2-w$
(\cite{bgi}, Theorem 23). Lemma \ref{e1} gives $r_X(P) =n+2-w$.\end{proof}

\vspace{0.3cm}

\qquad {\emph {Proof of Theorem \ref{e4}.}} Since $2\rho \le n+2$, the scheme $E$ is unique (see \cite{bgl}, Theorem 1.4.2, for Veronese embeddings of any projective space). Let us first  check that  $O\notin \langle E\rangle$.
Assume $O\in \langle E\rangle$. Thus $O\in \langle 2A\rangle \cap
\langle E\rangle$. Since
$\deg (2A)+\deg (E)=2+\rho \le n+2$, we get $\langle 2A\rangle \cap \langle E\rangle =\langle \{A\}\cap E\rangle$. Since $O\ne A$ and $O\in \langle 2A\rangle$, we get a contradiction. Therefore $O\notin \langle E\rangle$.

Since $\ell _O\vert C$ is injective, $\sharp (\ell _O(E))=\rho$.
Obviously, $\ell _O(B)=P\in \langle \ell _O(E)\rangle$. Thus $r_X(P) \le \rho$. But we have just proved that $O\notin \langle E\rangle$, i.e.
$\dim (\langle \ell _O(E)\rangle )=\rho$.

\quad (a) Here we assume $2\rho \le n$. Take $S\subset X$ computing $r_X(\ell _O(M))$. 
In this case it is sufficient to prove that $S=\ell _O(E)$. 
Since $\ell _O\vert C$ is injective, there
is a unique $S'\subset C$ such that $\ell _O(S')=S$. Since $P=\ell_O(M)\in \langle S\rangle$, we have
$M \in \langle \{O\}\cup S'\rangle \subset \langle 2A \cup S'\rangle$. Thus $M \in \langle 2A \cup S'\rangle \cap \langle
E\rangle$.
Since
$\deg (2A\cup S') + \deg (E) \le 2+2\rho \le n+2$, 
the scheme $2A\cup S'\cup E$ is linearly
independent. Thus $\langle 2A \cup S'\rangle \cap \langle
E\rangle$ is the linear span of the scheme-theoretic intersection $(2A\cup S')\cap E$. Since $E$
is reduced  and $M\notin \langle E'\rangle$ for any $E'\subsetneqq E$, we get that either $S' = E$ or $S'\cup \{A\} =E$. If $A\notin E$, then we get
$S'=E$, as wanted. 
\\
Now assume $A\in E$. If $S'=E$, then we are done. Hence we may also assume that $S' \ne E$, i.e. $S' = E\setminus \{A\}$. Since $M\in \langle E\rangle \setminus \langle S'\rangle$,
we have $\langle E\rangle = \langle \{M\}\cup S'\rangle$. Thus $O\notin \langle \{M\}\cup S'\rangle$. Thus $\ell _O(M) \notin \langle \ell _O(S')\rangle$, contradiction.

\quad (b) Here we assume $n+1 \le 2\rho \le n+2$.  Assume $r_X(P) \le \rho -2$ and take $S'\subset C$ such that $\sharp (S') = r_X(P)$ and $\ell _O(S')$ computes $r_X(P)$. Since $\deg (2A\cup S'\cup E) \le n+2$,
as in step (a) we get that $\langle 2A \cup S'\rangle \cap \langle
E\rangle$ is the linear span of the scheme-theoretic intersection $(2A\cup S')\cap E$. Since $O\in \langle 2A\rangle$ and $M\in \langle \{O\}\cup S'\rangle \cap \langle E\rangle$,
while $M\notin \langle E'\rangle$ for any $E'\subsetneqq E$, we get a contradiction.

\quad (c) Assume $n$ odd and $2\rho =n+3$. A general $P_1\in \mathbb {P}^{n+1}$ satisfies $r_C(P_1) =br_C(P_1) = (n+3)/2$. A
general
$P'\in \mathbb {P}^n$ satisfies $r_X(P') =br _X(P') = (n+1)/2$. A general $P'\in \mathbb {P}^n$ is of the form $\ell _O(P_1)$ with $P_1$ general in $\mathbb {P}^{n+1}$.\qed

\providecommand{\bysame}{\leavevmode\hbox to3em{\hrulefill}\thinspace}

\end{document}